\tikzstyle{block} = [draw=black, thick, text width=2cm, minimum height=1cm, align=center]  
\tikzstyle{arrow} = [thick,->,>=stealth]
\def\tr{\operatorname{tr}}
\def\Tr#1{\text{tr}\left(#1\right)}
\def\sth{\text{s.th. }}
\def\mc#1{\mathcal{#1}}
\def\und{\text{ and }}%
\newif\ifbem
\theoremstyle{definition}
\newtheorem{theorem}{Theorem}
\newtheorem{prop}{Proposition}
\newtheorem{example}[theorem]{Example}
\newtheorem{rem}[theorem]{Remark}
\newtheorem{lemma}[theorem]{Lemma}
\newtheorem{cor}[theorem]{Corollary}
\newtheorem{definition}[theorem]{Definition}
\let\openone\undefined
\newcommand\openone{\mathds{1}}
\def\cs{\mathcal{C}^\star}
\def\pp#1{\mathcal{P}_+(\mathcal{#1})}
\def\pps#1{\mathcal{P}_+(\mathcal{#1})^\star}
\def\sth{\text{s.th.}}
\begin{document}

\title{\texorpdfstring{Hierarchies for Semidefinite Optimization in $\cs$-Algebras}{}}

\author{Gereon Koßmann}
\email{kossmann@physik.rwth-aachen.de}
\affiliation{Institute for Quantum Information, RWTH Aachen University, Aachen, Germany}

\author{René Schwonnek}
\email{rene.schwonnek@itp.uni-hannover.de}
\affiliation{Naturwissenschaftlich–Technische Fakultät, Universität Siegen, Germany}
\affiliation{Institut f\"{u}r Theoretische Physik, Leibniz Universit\"{a}t Hannover, Germany}

\author{Jonathan Steinberg}
\email{steinberg@physik.uni-siegen.de}
\affiliation{Naturwissenschaftlich–Technische Fakultät, Universität Siegen, Germany}

\begin{abstract}
    Semidefinite Optimization has become a standard technique in the landscape of Mathematical Programming that has many applications in finite dimensional Quantum Information Theory. 
    This paper presents a way for finite-dimensional relaxations of general cone programs on $\cs$-algebras which have structurally similar properties to ordinary cone programs, only putting the notion of positivity at the core of optimization. We show that well-known hierarchies for generalized problems like NPA\cite{navascues2007} but also Lasserre's hierarchy\cite{lasserre} and to some extend symmetry reductions of generic SDPs by de-Klerk et al. \cite{de2007reduction} can be considered from a general point of view of $\cs$-algebras in combination to optimization problems.     
\end{abstract}

\begin{abstract}
    The class of Semidefinite Programs on matrices has a natural extension to conic optimization problems on $\cs$-algebras. Especially in the field of Quantum Information Theory many relevant optimization problems  can be  cast in this way. 
    Finding exact solutions to these problems is however a notoriously hard task. 

    In this paper we investigate a construction for finite-dimensional relaxations of general cone programs on $\cs$-algebras. 
    This construction gives outer bounds and can be grasped in terms of positive maps and basic linear algebra. 
    We show that well-known hierarchies like NPA\cite{navascues2007} but also Lasserre's hierarchy\cite{lasserre} and to some extend symmetry reductions of generic SDPs by de-Klerk et al. \cite{de2007reduction} can be formulated from this perspective.     
\end{abstract}
\maketitle
\section{Introduction}

In recent years, it has become increasingly apparent that many optimization problems can be represented as cone programs. In particular the community figured out that there is an incredible strong interplay between cone programs and convex optimization problems whereby the convex set of feasible points is not directly accessible\cite{lasserre, Berta2015, Doherty_2004}. In quantum information the most prominent representative of this type of problem is the optimization task over separable states. Optimizing over separable states is a convex optimization problem but the description of the set of separable states is computationally hard\cite{Doherty_2004}. However, the solution is a hierarchy of efficient SDPs and the complexity of the problem is translated to a convergent hierarchy of convex optimization problems. Even from another point of view, one can ask whether two parties in a quantum experiment can achieve certain probability distributions. This task was solved by the famous NPA hierarchy\cite{navascues2007,navascues2008}. What all these hierarchies have in common is that they map the structure of states to a finite space and have a kind of mapping of positive elements to positive elements, because in quantum information theory, a generic cone structure of positivity exists due to the state space of quantum systems. However, due to the existence of mixed states, more general forms of positivity have to be considered. 

In this work, we focus on this idea of pursuing positivity in the most general structure of $\cs$-algebras and assign a $\cs$-algebra to the description of a quantum experiment. In the simplest and yet also most general form, this is done by rules such as how two or more parties are spatially related\cite{algebra_generated_by_two}.  
This top down approach to the description of an experiment in nature inevitably leads to questions about the state space and properties of observables with respect to states. 
For example, if one wants to find the optimal state for a given observable over all states, this is directly a generalized form of a SDP: Optimize over all positive functionals so that they are normalized. Usually this problem is hard to tackle because the $\cs$-algebra to be considered is infinite dimensional. 

In this paper, we present a generalized formalism that is both mathematically sound with respect to the rules of the system (i.e., the spatial partitioning of the parties) and capable of solving general optimization problems.


The success and popularity can be mainly ascribed to two different points. First, a huge variety of problems can be formulated (at least approximately) in the framework of semidefinite programming. Second, there exists algorithms, like interior-point methods, which allow for an efficient computation of the solution. 
More formally, a SDP describes the task of optimizing a given linear functional over the cone of positive (semidefinite) matrices of a fixed dimension $n$, under linear constraints. 

In addition to our motivation in quantum information, we note in passing that many well-known problems can be formulated as $\cs$-SDP. In particular, many typical benchmark examples for quantum computing such as Boolean optimization can be understood in a general way as $\cs$-SDP.

In this work, we investigate a generalization of semidefinite programming from the algebra of matrices to an arbitrary $\cs$-algebra, which is obtained by replacing the cone of positive semidefinite matrices with the cone of positive semidefinite functionals. 


Further, we show that the finite dimensional approximations can be rephrased as a conventional matrix SDP, turning them into a efficient solvable problem. Subsequent, we discuss its relations to well known hieraries as those of Lasserre \cite{lasserre} and Navascues-Pironio-Acin (NPA) \cite{navascues2007}. \


\section{Preliminaries and introductory example}\label{sec:prelim_exam}
We start our considerations with a simple example coming from an operational point of view. 
Consider the following generic problem in quantum theory: $H \in \mathcal{B}(\mathcal{H}_1)$ is a (selfadjoint) bounded operator on a Hilbert space $\mathcal{H}_1$ and $\mathcal{B}_1(\mathcal{H}_1)^+$ is the set of all positive trace-class operators. We aim to minimize the expectation value of 
\begin{align}\label{prelim:opt_problem}
    \min &\tr[\rho H] \\
    \text{s.th.} &\tr[\rho] = 1 \\
    \rho \in &\mathcal{B}_1(\mathcal{H}_1)^+.
\end{align}
This problem is inherently hard, because $H$ is in general an operator on a infinite dimensional Hilbert space and apparently there is a priori no efficient way to handle the set of states. 

If we now assume to have access to a quantum channel\footnote{i.e. a completely positive and trace-preserving linear map.} 
\begin{align*}
    T:\mathcal{B}_1(\mathcal{H}_1)\to \mathcal{B}_1(\mathcal{H}_2)
\end{align*}
and we know an operator $M \in \mathcal{B}(\mathcal{H}_2)$ such that $T^\star(M) = H$, then it follows that the optimization problem \eqref{prelim:opt_problem} is equivalent to\footnote{Recall that if the channel $T$ is trace-preserving the dual mapping is unital.}
\begin{align*}
    \min &\tr[\sigma M] \\
    \text{s.th.} &\tr[\sigma] = 1 \\
    \sigma \in T(&\mathcal{B}_1(\mathcal{H}_1)^+) \subset \mathcal{B}_1(\mathcal{H}_2)^+.
\end{align*}
Now it can happen that the Hilbert space $\mathcal{H}_2$ is even finite dimensional. We have shifted the entire difficulty of the problem to the description of $T(\mathcal{B}_1(\mathcal{H}_1)^+)$, i.e. the image of the quantum channel $T$. Nevertheless, this set contains global information of the cone $\mathcal{B}_1(\mathcal{H}_1)^+$ and is therefore in general hard to describe. However it could be contained in a finite dimensional space. We can make an illustrative example: consider an entanglement breaking channel, which can be given by\cite{Horodecki_2003, holevo2005separability}
\begin{align*}
    \rho \mapsto \sum_k \sigma_k \tr[F_k \rho]
\end{align*}
for density operators $(\sigma_k)_k \in \mathcal{B}_1(\mathcal{H}_2)^+$ and a POVM $\{F_k\}_k \subset \mathcal{B}(\mathcal{H}_1)$. It is clear that the dual mapping is given by
\begin{align*}
    M \mapsto \sum_k \tr[\sigma_k M] F_k.
\end{align*}
Therefore, if we assume that $\{F_k\}_k$ corresponds to the spectral decomposition of $H$ and $M$ is chosen in a way that $\tr[\sigma_k M]$ is the $k$th eigenvalue, then we can conclude that this is in general an infinite series and therefore the cone of interesting density operators 
\begin{align*}
    \text{cone}(\sigma_k \ \vert \ k  \in \mathbb{N})
\end{align*}
is hard to describe although it is contained in a possible finite dimensional space. 
This example provides us with some nice theoretical insights. First of all it tells us how to convert at least theoretically infinite dimensional optimization problems to finite dimensional spaces. Moreover if we can approximate $T(\mathcal{B}_1(\mathcal{H}_1)^+)$ appropriately with linear constraints, then we can relax the problem with a standard semidefinite program. 
As it turns out, in the example above we have chosen the state space as the set of trace-class operators, which would correspond to the normal states. In order to avoid this discussion about duality, we abstract to general $\cs$-algebras and consider the dual as the generic state space. In addition, as we will conclude, our considerations are not restricted to the special class of quantum channels. Positivity of linear maps will be the central property of this paper in comparison to completely positivity and trace-preserving maps.

The structure of the document is as follows: in \autoref{sec:sem_optincstar} we introduce into our  notion of $\cs$-SDP. In \autoref{sec:finite_dim_relax} we show how to relax general $\cs$-SDP and in \autoref{sec:convergence} we show a convergence result for particular assumptions. In the last section \autoref{sec:symmetry_red} we give a small overview about the connection to symmetry reductions in this context.

\section{\texorpdfstring{Semidefinite optimization on $\cs$-algebras}{}}\label{sec:sem_optincstar}

We generalize now our toy example from \autoref{sec:prelim_exam} to a general language for so called \emph{$\cs$-SDP} and define what we mean by optimality in this context. 

\subsection{Conventional SDPs}\label{subsection:conventional_sdp}
Formally, an instance of a conventional SDP is given by a set of $m+1$ selfadjoint matrices $\{F_0,F_1,\dots,F_m\}$, all coming from a common space $\mathbb{C}^{n\times n}$, and a set of real numbers $\{f_1,\dots,f_m\}$ that together define the optimization problem
\begin{align}
\label{sdpdefmat}
    &&\min \quad&\Tr{\rho F_0} &&\qquad\\
    &&\sth \quad&\Tr{\rho F_i}\leq f_i &&\forall i\in\{1,\dots,m\}\nonumber\\
    &&\quad \quad&\rho \geq 0&&\qquad
    \nonumber,
\end{align}
which is conventionally referred to as the primal form of a semidefinite program. 
The optimization above runs over the cone of all positive selfadjoint matrices, i.e. those matrices $\rho$ for which equivalently\cite{Blackadar2006}
\begin{itemize}
    \item[(i)] all eigenvalues are positive and $\rho$ is self-adjoint 
    \item[(ii)] all expectations $\Tr{\rho XX^*}$ are positive
    \item[(iii)]a decomposition $\rho=XX^*$ exists.
\end{itemize}
It should be noted that the cone of positive semidefinite (PSD) matrices  plays two roles in the case of conventional SDPs. On one hand, PSD matrices are  the positive elements of $\mathbb{C}^{n \times n}$ from a purely algebraic perspective. This notion of positivity only depends on the form of the element itself. This corresponds to the definition (iii) with $\rho = XX^{*}$ or to definition (i) by using the spectrum of $\rho$. On the other hand, PSD matrices are also used to \textit{label} the set of positive linear functionals, see  definition  (ii). Indeed, by Riesz representation it can be shown that any positive linear functional $\omega: \mathbb{C}^{n \times n} \rightarrow \mathbb{C}$ is of the from $\omega(X) = \Tr{\rho X}$ for a positive $\rho$. Consequently, each of those properties (i)-(iii) could in principle give rise to a variety of different generalizations. 
Initially the class of optimization problems that is considered in this work could be understood as a generalization of property (ii):  we generalize the matrices $\{F_0,F_1,\dots,F_m\}$ by self-adjoint elements of a $C^*$-algebra $\mathcal{A}$ and consider optimizations over the cone of functionals that are positive semidefinite on all elements of the form $XX^*$.



\subsection{Generalized SDPs}
With respect to the usual matrix product the vector space $\mathbb{C}^{n\times n}$ naturally admits the structure of an algebra, on which it is somewhat natural and well motivated by many applications to consider the hermitian conjugation
\begin{align}
    X^*:=X^\dagger=\overline{X}^T
\end{align}
as an involution and 
\begin{align}
   \Vert X\Vert := \sup \lbrace \, \Tr{\rho X}  \, \vert \,   \rho \geq 0 \und \Tr{\rho}=1 \rbrace
\end{align}
as a norm. In the following we will write $M_n(\mathbb{C}):=(\mathbb{C}^{n\times n},\cdot,\phantom{x}^*,\Vert\cdot \Vert)$ in order to denote $\mathbb{C}^{n \times n}$ equipped with this extra structure. Further this norm shares the property of being submultiplicative, that is for any $X,Y \in M_{n}(\mathbb{C})$ one has $\Vert XY \Vert \leq \Vert X \Vert \, \Vert Y \Vert$ and turns $M_{n}(\mathbb{C})$ into a complete space.

The abstraction of this structure, i.e. $M_{n}(\mathbb{C})$ is nowadays known as a $\cs$-algebra $\mc A$: a Banach, i.e. norm complete, algebra equipped with a $*$-involution that fulfills the so called $\cs$-property $\Vert XX^*\Vert=\Vert X\Vert^2$\cite{Blackadar2006}. The deceptively simple axioms turn out to be extremely powerful and forcing a rigid structure on a $\cs$-algebra. In particular, it is possible to retrieve the initial example of $M_{n}(\mathbb{C})$ from the abstract definition in case of finite dimensionality: any finite dimensional $\cs$-algebra is isomorphic to a direct sum of matrix algebras (Artin-Wedderburn Theorem). 

However, in a general algebra $\mathcal{A}$, the three definitions (i)-(iii) are not longer equivalent and we have to make a decision for defining what a generalized SDP is. Given a general algebra $\mathcal{A}$ it makes sense to call an element $F \in \mathcal{A}$ selfadjoint, if $F=F^*$ holds with respect to the $*$-involution. Furthermore, we get a generalization of positive semidefiniteness by generalizing property (iii), i.e. we say that an element $P\in\mc A$ is positive semidefinite, written $P\geq0$, if it admits a decomposition $P=XX^*$. Alternatively one could think about generalizing property (i) by demanding $P$ to have a non-negative spectrum. However, it turns out that this definition would coincide with $P=XX^*$, which is in fact a distinct property of $\cs$-algebras\footnote{Indeed, for the more general concept of a $*$-algebra, where one drops the requirement of a complete norm, there exist cases where $\rho = XX^*$ can have a negative eigenvalues.}.
The set of all those positive elements forms a closed convex cone in $\mathcal{A}$, which will be denoted by $\pp{A}$. The dual of this cone will be denoted by $\pps{A}$ and can be defined algebraically\cite{Blackadar2006} by
\begin{align}\label{eq:definition_dualcone}
    \pps{A} := \{\rho:\mc A \mapsto \mathbb{C} \ | \ \text{ linear},\quad \rho(XX^*)\geq 0 \quad\forall X\in\mc A\} .
\end{align}

Now we are able to state the generalized version of a conventional SDP with operations on an abstract $\cs$-algebra. Motivated by applications in which $\rho$ takes the role of a quantum state, we take the path of replacing $\mathcal{P}_+(\mathbb{C}^{n\times n})^\star$ by $\pps{\mathcal{A}}$ and therefore consider optimization problems of the following type: 
\begin{definition}\label{def:sdp}
For $m\in \mathbb{N}$, let $\{F_0,F_1,\dots F_m\}$ be a list of selfadjoint elements from  a $\cs$-algebra  $\mc A$ and let $\{f_1,\dots,f_m\}$ be a list of real numbers. The conic optimization problem 
\begin{align}
\label{sdpdef}
    &&\inf \quad&\omega( F_0) &&\quad\\
    &&\sth \quad&\omega(F_i)\leq f_i &&\forall i\in\{1,\dots,m\}\nonumber \\
    &&\quad \quad&\omega\in\pps{\mathcal{A}}&&\qquad
\end{align} is called the primal form of a semidefinite program on $\mc A$, short $\cs$-SDP. 
\end{definition}

Same as in the case of ordinary SDP, we can define feasible points.
\begin{definition}
    A $\cs$-SDP is called feasible, if there is an element $\omega \in \pps{ \mathcal{A}}$ with 
    \begin{align}
        &&\omega(F_i) \leq f_i && \forall i \in \{1,\dots,m\}.
    \end{align}
    Accordingly an optimal solution is an element $\tilde{\omega}$, such that for all feasible solutions $\omega \in \pps{\mathcal{A}}$
    \begin{equation}
        \omega(F_0)\geq \tilde{\omega}(F_0)
    \end{equation}
    holds true.
\end{definition}
Because well-known finite dimensional SDPs are included in this generalization, we must be appropriately careful when adding a value to the program. This is not important for the first statements, since we first specify bounds for the value. It only gets interesting when we define and use the dual program.
Therefore, the \textit{value} (possibly infinity) of a $\cs$-SDP is given by
\begin{equation}\label{eq:value}
    \tilde{c} := \inf \{\omega(F_0) \ | \ \omega(F_i) \geq f_i \quad \forall i \in \{1,\dots,m\}\}, \quad \omega \in \pps{\mathcal{A}}.
\end{equation}

Our original interest on this class of problems stems from the huge variety of well-known problem types that can obtained as particular instances of \eqref{sdpdef}. In the next section we present examples of how those problems can be rephrased. 

However, for understanding the theoretical insights the interested reader can skip the next section. 


\subsection{\texorpdfstring{Examples of $\cs$-SDPs}{}}\label{subsec:examples_sdp}
In the following we will rephrase two important optimization problems, namely the problem of polynomial optimization and the problem of optimizing over the set of quantum correlations as a $\cs$-SDP. 
\begin{example}[The Generalized Moment Problem]\label{ex:lasserre}
    We consider a the generalized moment problem (GMP), which can be solved Lasserre's famous polynomial optimization method \cite{lasserre}. Without sake of completeness we introduce concepts from Lasserre's hierarchy which are important for us later in this work. Consider a polynomial $f$ in $d$ variables and a compact set $K \subset \mathbb{R}^d$ and the optimization task
    \begin{equation}\label{eq:polynomial_optimization}
        \inf \{f(x) \ | \ x \in K\}.
    \end{equation}
    One can show\cite{lasserre} that \eqref{eq:polynomial_optimization} is equivalent to the following optimization task:
    \begin{align*}
        \inf \quad & \mu(f) \\
        \sth \quad & \mu(1) = 1  \\
        &\mu \in \mathcal{M}(K)_+
    \end{align*}
    whereby $\mathcal{M}(K)_+$ is the set of positive Borel measures; i.e. the topological dual space of $C(K)$. In general a GMP is given by
    \begin{align}\label{eq:GMP}
        \rho_{\text{mom}} = \sup_{\mu \in \mathcal{M}(K)_+} &\int_{K} f d\mu \\ \nonumber
        \text{s.th.} \ &\int_K h_j d\mu \leq h_j \quad \forall 1\leq j \leq m
    \end{align}
    for multivariate polynomials $f,h_1,\ldots,h_m$ and real numbers $\gamma_1,\ldots,\gamma_m$.
    
    one possible point of view would be the following. Consider the set of finite positive Borel measures and in particular the \emph{moment sequence} $y = (y_\alpha)_{\alpha \in \mathbb{N}^n}$ for each of them given by
    \begin{align*}
        y_\alpha = \int_K x^\alpha d\mu.
    \end{align*}
    If we introduce the functional
    \begin{align*}
        L_y: \mathbb{R}[x] &\to \mathbb{R} \\
        f &\mapsto L_y(f) := \sum_{\alpha \in \mathbb{N}^n} f_\alpha y_\alpha
    \end{align*}
    for $f = \sum_{\alpha} f_\alpha x^\alpha$, so we can reformulate the optimization problem \eqref{eq:GMP} to 
    \begin{align*}
        \sup_{y \ \text{moment sequence}} &L_y(f) \\
    \text{s.th.} \ &L_y(h_j)\leq \gamma_j \quad \forall 1\leq j \leq m.
    \end{align*}
    The question whether $y$ is moment sequence can be answered with linear constraints of the following type. Define for a multivariate polynomial in $n$ variables \emph{moment matrices} to be
    \begin{align*}
        M_r(u y)_{\alpha,\beta} :=  (L_y(u x^\alpha x^\beta))_{\alpha,\beta}
    \end{align*}
    whereby we cut the degree up to $r$. Then Theorem $3.8$\cite{Lasserre_book} states that $y$ has a finite Borel representing measure with support contained in a semi-algebraic set $K = \{x \in \mathbb{R}^n \ \vert \ g_j(x)\geq 0 \quad \forall 1\leq j \leq m$\footnote{There is in addition a compactness criteria for technical reasons.} if and only if
    \begin{align*}
        L_y(f^2g_J) \geq 0 \quad \forall J \subset \{1,\ldots,m\} \quad \forall f \in \mathbb{R}[x]
    \end{align*}
    or equivalently 
    \begin{align*}
        M_r(g_Jy)\geq 0 \quad \forall J \subset \{1,\ldots,m\} \quad \forall r \in \mathbb{N}.
    \end{align*}
    One can think about these constraints as semidefinite constraints for $y$ being a moment sequence. In particular we can introduce the dual program
    \begin{align}\label{eq:dual_GMP}
        \rho_{\text{pop}} = \inf_\lambda \sum_{j=1}^m &\gamma_j \lambda_j \\
        \text{s.th.} \ \sum_{j=1}^m &\lambda_j h_j(x) - f(x)\geq 0 \quad \forall x \in K \\
        \lambda_j \geq 0 &\quad \forall 1\leq j \leq m.
    \end{align}
    I.e. the problem is equivalent in checking whether a polynomial is positive over $K$.
    For basic semi-algebraic sets there is Putinar's Positivstellensatz\cite{Putinar} which tells us how positive polynomials look like on these type of sets and therefore one can basically relax the dual problem \eqref{eq:dual_GMP} to a finite level version and solve the corresponding semidefinite program. 

    The interesting observation of this story is that from a global point of view \eqref{eq:GMP} is something what we will later call $\cs$-SDP, because it is an optimization problem over the dual space of the $\cs$-algebra $C(K)$ for the compact set $K$. Apparently this does not solve the description of the dual space of $C(K)$, i.e. due to Riesz famous representation theorem $\mathcal{M}(K)$, but in general it leads the eyes to the question whether one can decide positivity in a $\cs$-algebra. And this is theoretically easily done: in a $\cs$-algebra $\mathcal{A}$ an element $x \in \mathcal{A}$ is positive if and only if there is a $\star$-root, i.e. there is $y \in \mathcal{A}$ such that $x = y^\star y$. That is, there is somehow the difference of being positive in terms of polynomials, which is apparently on the one hand a hard question and solved by Putinar's Positivstellensatz and on the other hand it seems to be quite easy because we have roots at our disposal. 
\end{example}

\begin{example}[NPA]\label{ex:NPA}
    Essentially the NPA hierarchy\cite{navascues2007} examines the question whether there is a quantum system for a given joint probability distribution. To be concrete, consider a bipartite experiment and a bipartite probability distribution. The question is whether the bipartite probability distribution can be generated by local measurements of two parties. In a mathematical sense this ansatz works with free $\star$-algebras. It chooses operators and works then with representations on Hilbert spaces of this free $\star$-algebras. 
    
    Comparing to this ansatz we assume in the paper presented here always the existence of a $\mathcal{C}^\star$-algebra at the beginning. This clarifies mathematically a lot, because we have a direct notion of positivity which is a priori not clear in NPA (one has to construct positivity from representations). Moreover, we have a simple notion of 'quadratic module', it will be a set of squares. However, the relaxed semidefinite programs in the end can become the same in particular cases. 
\end{example}

In conclusion all the selected examples among many others presented above have the common structure of a $\mathcal{C}^\star$-algebra and an optimization over a feasible region in the dual cone. Therefore it is worth to construct a theory how to solve in general such problems assuming only the global structure.









\def\jnc{\operatorname{JNC}}%

\section{Finite dimensional relaxations of generalized SDPs}\label{sec:finite_dim_relax}

\subsection{The joint numerical range}\label{subsec:joint_numerical}
In contrast to conventional SDPs, which are  considered to be efficiently solvable, finding the solution to a generalized SDP may turn out to be a challenging task. 

In the following we will introduce methods for approximating a generalized $\cs$-SDP by a finite dimensional one. 
The leading intuition for the existence of such a finite approximation to a $\cs$-SDP \eqref{sdpdef} stems from the observation that, even though we may consider a wild and potentially infinite dimensional algebra $\mc A$, all the relevant information for an optimization like \eqref{sdpdef} is in principle encoded in the finite (indeed not more that $N$ dimensional)  subspace $\mc F\subset \mc A$ that is spanned by  $\vec F :=\{F_0,\dots, F_N\}$ and in an appropriate subset of its dual $\mathcal{F}^\star$. 
It is easy to see that  we can, at least formally, replace the optimization in  \eqref{sdpdef} by only considering functionals from the convex cone
\begin{align}
 \mc F_{+}^{\star}:=\pps{\mc A}\vert_{ \mc F} \subset \mc F^\star.
\end{align} 
Indeed, suppose we have a functional $\omega \in \pps{\mathcal{A}}$ that is a solution of \eqref{sdpdef}. By definition, $\omega$ fulfills all constraints and thus also $\omega \vert_{\mathcal{F}}$ does. Clearly, we also have $\omega(F_{0}) = \omega \vert_{\mathcal{F}} (F_{0})$, hence the optimal value is independent of whether the restricted version $\omega \vert_{\mc F}$ or the full supported functional $\omega$ is considered. 
There is also a more direct way to see the finite dimensional object standing behind an generalized SDP. Consider the set
\begin{align}\label{jnc}
\jnc_{\vec F }: =\{\vec y_\omega=(\omega(F_0),\dots,\omega(F_N))|\omega\in\pps{\mc A}\}\subset\mathbb R^N
\end{align}
to which we will refer as joint numerical cone (JNC). This name is chosen in correspondence to the well-known concept of a (convex) joint numerical range, which is defined similarly to \eqref{jnc}.\footnote{ The only difference is that in the definition of the (convex) joint numerical range $\omega$ runs over all normalized states of a unital $\cs$-algebra. Moreover, the joint numerical range can be seen as a cone base for the JNC.}
$\jnc_{\vec F}$ can, indeed, be understood as embedding of $\mathcal{F}^\star_+$ into $\mathbb R^N$, in the sense that for any $f\in\mc F$ parameterized by coefficients $\vec\lambda$ with $f=\sum\lambda_\mu F_\mu$, we have 
\begin{align}
    \omega(f)=\sum \lambda_\mu \omega(F_\mu) = \vec \lambda.\vec y_{\omega}
\end{align}
Accordingly we can state any general SDP as explicit conic program 
\begin{align}
\label{sdpjnc}
    &&\inf \quad&\vec y .\mathbf e_0 &&\quad\\
    &&\sth \quad&\vec y \leq \vec f &&\quad\nonumber \\
    &&\quad \quad&\vec y\in\jnc_{\vec F }&&\qquad
\end{align}
over the finite dimensional cone $ \jnc_{\vec F }$. 

Despite that this view is giving some nice perspective on the problem, it is not yet doing any work. 
Here, the  caveat is that the exact shape of $\mc F_+^\star$ or  $ \jnc_{\vec F }$ is generally inaccessible, since it still contains global information about the cone $\pps{\mc A}$. It is worth to stress that the shape of the cone $\mathcal{F}_+^\star$ has a priori no common properties with the usual positive cone in a finite dimensional vector space. It inherits a term of positivity from the global structure. Roughly our main task is to track positivity through the whole approximation. The mechanism of  the following approximation technique can be therefore understood as a construction that provides us with numerically accessible  outer approximations to $\mc F_+^\star$ or equivalently  $ \jnc_{\vec F }$. 

\subsection{Construction of a Relaxation}
In this section we discuss how to approximate the value of a $\cs$-SDP with theoretical methods and a finite dimensional relaxation. Recall from \eqref{eq:definition_dualcone} that the cone $\pps{\mathcal{A}}$ is the dual cone of $\pp{\mathcal{A}}$. It means that inner approximations of the cone $\pp{\mathcal{A}}$ lead to outer approximations of the cone $\pps{\mathcal{A}}$. But outer approximations of the dual cone are giving lower bounds to a $\cs$-SDP. 

In order to connect to the two very nice points of view in \autoref{sec:prelim_exam} and \autoref{subsec:joint_numerical} we have to attenuate the ideas from \autoref{sec:prelim_exam} and abstract the ideas from \autoref{subsec:joint_numerical}. In \autoref{sec:prelim_exam} we enforce the map $T$ to be a quantum channel, i.e. completely positive and trace-preserving. As it turns out firstly it is very difficult to find the right channel which dual maps a known family of operator $\{M_0,M_1,\dots,M_m\}$ to operators $\{F_0,F_1,\ldots,F_m\}$ for a $\cs$-SDP. Secondly it is not clear how to appropriate describe the set of states after applying the channel. 

To circumvent the first problem we use the dual point of view and construct a linear mapping $\phi$ which is not completely positive and trace-preserving anymore from a finite dimensional space into the algebra $\mathcal{A}$ of interest. Since positivity of a map implies positivity of the dual, we can make similar arguments even without completely positivity. 

In a first step we can choose random positive elements from the $\cs$-algebra to approximate the positive cone $\mathcal{P}_+(\mathcal{A})$. But then we need an assignment between these abstract elements and a cone in $\mathbb{C}^{n\times n}$ for an appropriate size $n$. The important point is that we need a way to translate positivity to a finite calculable system. The following fundamental observation, which is in fact a generalization of the idea of 'sum of squares'\cite{lasserre} in polynomial optimization or in the same manner from the $\star$-isomorphism in symmetry reductions\cite{de2007reduction} in generic SDPs gives a way for translating positivity to a controllable cone. 

\begin{lemma}[Fundamental Lemma] \label{lem:fundamental_lem}
    Let $\mathcal{A}$ be a $\cs$-algebra and consider a set 
    \begin{align}
    \gamma=\{\gamma_1,\dots,\gamma_n\} \subset \mathcal{A}.
    \end{align}
    Then the mapping 
    \begin{align}
       \phi: \mathbb{C}^{n\times n} &\to \mathcal{A} \\
        M &\mapsto \sum_{i,j = 1}^n m_{ij} \gamma_{i}\gamma_j^\star
    \end{align}
    is linear and positive. In particular let $x = yy^\star \in \pp{\mathcal{A}}$ and $y \in \operatorname{span}(\gamma)$. Then there exists $M \geq 0$ such that $\phi(M) = x$.  
\end{lemma}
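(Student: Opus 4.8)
The plan is to verify the three assertions in turn: linearity of $\phi$, positivity of $\phi$, and the surjectivity-type statement that every $x = yy^\star$ with $y \in \operatorname{span}(\gamma)$ arises as $\phi(M)$ for some $M \geq 0$.

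Linearity is immediate: the assignment $M = (m_{ij}) \mapsto \sum_{i,j} m_{ij}\gamma_i\gamma_j^\star$ is manifestly linear in the entries $m_{ij}$, since matrix addition and scalar multiplication act entrywise and the sum is finite. I would dispatch this in one sentence.

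For positivity, I need to show $\phi(M) \in \pp{\mathcal{A}}$ whenever $M \geq 0$, i.e. that $\phi(M)$ admits a decomposition $ZZ^\star$. Here I would use the spectral/Cholesky-type factorization of a PSD matrix: write $M = \sum_{k} v^{(k)} (v^{(k)})^\dagger$ as a sum of rank-one positive matrices, so $m_{ij} = \sum_k v^{(k)}_i \overline{v^{(k)}_j}$. Substituting,
\begin{align}
\phi(M) = \sum_{i,j}\sum_k v^{(k)}_i \overline{v^{(k)}_j}\, \gamma_i \gamma_j^\star = \sum_k \Big(\sum_i v^{(k)}_i \gamma_i\Big)\Big(\sum_j v^{(k)}_j \gamma_j\Big)^\star.
\end{align}
Setting $z_k := \sum_i v^{(k)}_i \gamma_i \in \mathcal{A}$, this reads $\phi(M) = \sum_k z_k z_k^\star$, which is a finite sum of positive elements, hence positive (the cone $\pp{\mathcal{A}}$ is closed under addition). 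This also shows $\phi(M) = ZZ^\star$ directly if one stacks the $z_k$ into a single element of a matrix amplification, but the sum-of-squares form already suffices given the paper's definition of positivity.

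For the last claim, suppose $x = yy^\star$ with $y \in \operatorname{span}(\gamma)$, say $y = \sum_i c_i \gamma_i$ for a coefficient vector $c = (c_i) \in \mathbb{C}^n$. Take $M := c\,c^\dagger$, the rank-one PSD matrix with entries $m_{ij} = c_i \overline{c_j}$. Then $M \geq 0$ and, by the computation above specialized to a single term, $\phi(M) = \big(\sum_i c_i\gamma_i\big)\big(\sum_j c_j\gamma_j\big)^\star = yy^\star = x$, as desired.

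I do not anticipate a genuine obstacle here; the only point requiring a little care is the bookkeeping of complex conjugates in the rank-one expansion (making sure the involution lands on the correct factor so that $\gamma_j$ rather than $\gamma_j^\star$ gets the $v^{(k)}_j$ coefficient), and being explicit that "positive semidefinite" for $M$ means precisely the existence of such a decomposition $M = \sum_k v^{(k)}(v^{(k)})^\dagger$, which is property (iii) for the matrix algebra $M_n(\mathbb{C})$. Everything else is a direct substitution.
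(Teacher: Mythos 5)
Your proof is correct and follows essentially the same route as the paper's: both arguments rest on decomposing a PSD matrix $M$ into rank-one pieces $v^{(k)}(v^{(k)})^\dagger$ and observing that each contributes a square $\bigl(\sum_i v^{(k)}_i\gamma_i\bigr)\bigl(\sum_i v^{(k)}_i\gamma_i\bigr)^\star$ in $\mathcal{A}$. You additionally spell out the final claim (taking $M=cc^\dagger$ for $y=\sum_i c_i\gamma_i$), which the paper's proof leaves implicit, so nothing is missing.
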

\begin{proof}
    Linearity is clear. We have to show positivity. \\
    Positive elements are hermitian in $\mathbb{C}^{n\times n}$ and therefore each positive elements have a spectral decomposition. Therefore let $M \in \mathbb{C}^{n\times n}$ be positive. Then 
    \begin{equation*}
        M = \sum_{k} p_k P_k
    \end{equation*}
    with $p_k \in \mathbb{R}_{>0}$ and $P_k = c^{(k)}(c^{(k)})^\star$ one dimensional projections. Therefore 
    \begin{align}
        \phi(M) &= \sum_{k} p_k \phi(c^{(k)}(c^{(k)})^\star) \\
        &= \sum_k p_k \sum_{i,j} c^k_i\overline{c^k}_j \gamma_i \gamma_j^\star \\
        &= \sum_k p_k (\sum_i c^k_i \gamma_i) (\sum_j c^k_j \gamma_j)^\star.
    \end{align}
    But then we see that each sumand over $k$ is a square in $\mathbb{A}$ and therefore positive. Positive sums of positive elements are positive which concludes the proof. 
\end{proof} 

The \autoref{lem:fundamental_lem} shows us that for a given sequence of elements $\gamma \subset \mathcal{A}$ there is a way of translating structure from a $\cs$-algebra to a finite dimensional space. But in general to use the preimage of $\phi$ for an approximation needs the structure of the kernel of $\phi$ because the kernel contains the important inner structure of the algebra $\mathcal{A}$. In other words the kernel decides whether two parameterized elements $M,M^\prime \in C^{n\times n}$ are equal to each other. To get a working approximation we have to have in mind that this is precisely the point where we have to involve the knowledge about the algebra. For example the relations of generators of the algebra. 

We are now in the situation to present a working approximation. Recall that, in fact, we want to approximate $\mathcal{F}_+^\star$ from \autoref{subsec:joint_numerical}. For this goal we optimally need to build a hierarchy\footnote{actually always intersected with $\mathcal{F}$} of numerically accessible cones
\begin{equation*}
    \Sigma_2^{(1)} \subset \ldots \subset \Sigma_2^{(n)} \subset \Sigma_2^{(n+1)} \subset \ldots \subset \pp{\mathcal{A}}.
\end{equation*}
These cones should be the images of positive cones in $\mathbb{C}^{n\times n}$ in \autoref{lem:fundamental_lem} for different sizes of $n$. 
The sequence of dual cones, as defined in \eqref{eq:definition_dualcone} leads then to a sequence
\begin{equation}\label{eq:hierarchy}
    (\Sigma_2^{(1)})^\star \supset \ldots \supset (\Sigma_2^{(n)})^\star \supset (\Sigma_2^{(n+1)})^\star \supset \ldots \supset \pps{\mathcal{A}}.
\end{equation}
These are outer approximations of $\pps{\mathcal{A}}$. 
To construct such sequences of cones we start with a sequence of elements in the $\cs$-algebra $\mathcal{A}$
\begin{align}
    \gamma=\{\gamma_1,\dots,\gamma_n\}.
\end{align}
These elements $\gamma$ span a subspace $V_n \subset\mc A$, i.e. 
\begin{align}
    V_n := \operatorname{span}(\gamma)=\left\{v\in\mc A\middle|v=\sum_{i=1}^n v_i \gamma_i, \text{ for some } v_1,\dots, v_n \in \mathbb{C}\right\},
\end{align}
from which we can construct a space\footnote{the notation $\operatorname{conv}(\Omega)$ denotes the convex hull of a set $\omega$} 
\begin{align}
    Q_n := \operatorname{conv}\{vw^\star \ | \ v,w\in V\}=\operatorname{span}\{\gamma_i\gamma_j^\star\} = \operatorname{Im}(\phi)
\end{align}
that contains all sums of the products that could be build from elements from $V_n$. To ensure that we get senseful statements about $\mathcal{F}$ we have to consider sequences $\gamma$ that are constructed  such that the subspace $\mc F$ is  contained in $Q_n$, because then the cone $\mathcal{F}_+^\star = \pps{\mathcal{A}} \cap \mathcal{F}^\star$ is contained in \eqref{eq:hierarchy}. Recall that this is exactly the difficulty with our channel in \autoref{sec:prelim_exam}. Starting from the functionals, in general we can not control which element is the right preimage under the dual mapping $T^\star$ there.  
In unital algebras this requirement could always be fulfilled by appending $\{F_0,\dots, F_N, \mathbb{I} \}$ to an initial sequence $\gamma_0$. An alternative, which also works on non-unital algebras, would be to append the square roots  $\sqrt{F_i}$. 

Then we consider the cone 
\begin{align}
    \Sigma_2^{(n)} =\operatorname{conv}\{vv^\star \ | \ v\in V \} \label{sigma2}
\end{align}
of 'sums of squares' in $Q_n$. Indeed, as $\text{dim} (Q) \leq n^{2}$ and $\Sigma_{2} \subset Q$, we also have $\text{dim}(\Sigma_{2}) \leq n^{2} < \infty$. 

Since in a $\cs$ algebra, all squares, as well as their convex hull, are positive we obtain
\begin{align}
 \Sigma_2^{(n)} \subseteq\mc P^+(\mc A), 
\end{align}
which implies \eqref{eq:hierarchy} for the corresponding dual cones. This however implies that the finite cone $\Sigma_2 ^\star |_{\mc F}$ (that no longer contains inaccessible global information) is a super set 
\begin{align}
    (\Sigma_2 ^{(n)})^\star |_{\mc F}\supseteq\mc F_+^\star. 
\end{align}
Hence we can conclude
\begin{prop}
The value of a $\cs$-SDP \eqref{sdpdef}, defined by the sets $\{F_0,\dots, F_m\}$ and  $\{f_1,\dots,f_m\}$, is bounded from below by the optimization
\begin{align}
\label{sdprelax}
    &&\inf \quad&\omega(F_0) &&\quad\\
    &&\sth \quad&\omega(F_i)\leq f_i &&\forall i\in\{1,\dots,m\}\nonumber\\
    &&\quad&\omega\in (\Sigma_2^{(n)}) ^\star \vert_{\mc F} &&\quad\nonumber.
\end{align}

\end{prop}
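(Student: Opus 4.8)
The plan is to show that the optimization in \eqref{sdprelax} is a relaxation of the $\cs$-SDP \eqref{sdpdef}, i.e.\ that its feasible set contains (the restriction of) every feasible point of the original problem, and that the objective values agree on those points; since we minimize, this immediately yields that \eqref{sdprelax} is a lower bound. Concretely, I would proceed as follows.

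\textbf{Step 1: the chain of cone inclusions.} I would first record that, by \autoref{lem:fundamental_lem}, each element $vv^\star$ with $v\in V_n$ is a square, hence positive in the $\cs$-algebra, and since $\pp{\mathcal{A}}$ is a convex cone, its convex hull $\Sigma_2^{(n)}$ is contained in $\pp{\mathcal{A}}$. Taking dual cones reverses inclusions: for any two subsets $C\subseteq D$ of $\mathcal{A}$ one has $D^\star\subseteq C^\star$ at the level of linear functionals on $\mathcal{A}$, so $\pps{\mathcal{A}}\subseteq (\Sigma_2^{(n)})^\star$. This is the inclusion \eqref{eq:hierarchy} specialized to level $n$, and it is essentially bookkeeping given the Fundamental Lemma.

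\textbf{Step 2: restriction to $\mathcal{F}$ preserves the inclusion and the objective.} Let $\omega\in\pps{\mathcal{A}}$ be feasible for \eqref{sdpdef}. Its restriction $\omega|_{\mathcal{F}}$ lies in $\pps{\mathcal{A}}|_{\mathcal{F}}$, which by Step 1 is contained in $(\Sigma_2^{(n)})^\star|_{\mathcal{F}}$. Because the constraint data $F_0,\dots,F_m$ all lie in $\mathcal{F}$ (here one uses the standing assumption from the construction that $\mathcal{F}\subseteq Q_n$, or equivalently that $\vec F$ was appended to $\gamma$), the values $\omega(F_i)$ depend only on $\omega|_{\mathcal{F}}$; hence $\omega|_{\mathcal{F}}$ satisfies the constraints $\omega|_{\mathcal{F}}(F_i)\le f_i$ and attains the same objective value $\omega|_{\mathcal{F}}(F_0)=\omega(F_0)$. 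Therefore every feasible point of \eqref{sdpdef} produces a feasible point of \eqref{sdprelax} of equal objective value, so the infimum in \eqref{sdprelax} is at most that of \eqref{sdpdef}; since \eqref{sdprelax} minimizes over a possibly larger feasible set, its value is a lower bound.

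\textbf{Expected main obstacle.} The only subtle point is the direction of the dual-cone inclusion together with the subtlety that $(\Sigma_2^{(n)})^\star$ is defined via functionals that need only be positive on $\Sigma_2^{(n)}$ rather than on all of $\pp{\mathcal{A}}$, so one must be careful that these are honestly functionals on the same space $\mathcal{A}$ (or on $\mathcal{F}$ after restriction) and that ``$\le$'' in $\jnc$-coordinates is componentwise. I would also make explicit why no value is ``lost'' by replacing $\omega$ with $\omega|_{\mathcal{F}}$ — this is exactly the remark already made in \autoref{subsec:joint_numerical}, so it can be cited rather than reproved. Everything else (linearity, convexity of the cones, finite-dimensionality) is routine and can be stated in a sentence.
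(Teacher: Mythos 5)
Your proposal is correct and follows essentially the same route as the paper: the squares $vv^\star$ are positive by the Fundamental Lemma, so $\Sigma_2^{(n)}\subseteq\pp{A}$, dualization reverses the inclusion to give $(\Sigma_2^{(n)})^\star\vert_{\mc F}\supseteq\mc F_+^\star$, and since the data $F_0,\dots,F_m$ live in $\mc F$ every feasible point of \eqref{sdpdef} restricts to a feasible point of \eqref{sdprelax} with the same objective value. Your Step 2 is in fact slightly more explicit than the paper, which leaves the restriction argument to the earlier discussion in \autoref{subsec:joint_numerical}.
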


We want to mention that there are somehow some open questions from a practical point of view, which we tackle in the next section. In particular, we did not talk about any properties of the sequence $\gamma$ and their properties of approximating the positive cone of the $\cs$-algebra.




\subsection{The approximation as a conventional SDP}
For this section we choose a fixed set $\gamma$ and corresponding spaces $V,Q$ and cone $\Sigma_2$. 

The optimization \eqref{sdprelax} is in fact  not only finite from a formal perspective rather than also from  actual practicality since it can be explicitly formulated by a matrix valued SDP of $n\times n$ matrices, as we will show next.  

Recall the mapping $\phi$ from \autoref{lem:fundamental_lem}, defined by
\begin{align}
    \phi: \mathbb{C}^{n\times n}\rightarrow Q \quad \phi( |i\rangle\langle j|)=\gamma_i\gamma_j^\star,
\end{align}
i.e. the map that maps a matrix $M=\sum m_{ij}|i\rangle\langle j|$ to an algebra element 
\begin{align}
    \phi(M)=\sum m_{ij} \gamma_i\gamma_j^\star. \label{defphi}
\end{align}
This map is surjective, since \eqref{defphi} merely describes the span of $\gamma_i\gamma_j^\star$, i.e., $Q$. 
Let $\operatorname{ker}(\phi) \subset \mathbb{C}^{n \times n}$ be the kernel of $\phi$ and denote by $\pi$ the projection map $\mathbb{C}^{n \times n} \rightarrow \mathbb{C}^{n \times n} / \operatorname{ker}(\phi)$, which assigns to each element its coset. We write $[x]:=\{x+k| k\in \operatorname{ker}(\phi)\} = \pi^{-1} (\phi(x))$ denote  the equivalence class induces by $\operatorname{ker}(\phi)$. As $\phi$ is surjective, we obtain from isomorphism theorem, that the map $\hat{\phi}$ is an isomorphism. 
\begin{figure}[!h]
\begin{center}
\begin{tikzpicture}
  \matrix (m)
    [
      matrix of math nodes,
      row sep    = 3em,
      column sep = 4em
    ]
    {
      \mathbb{C}^{n \times n}              &  Q \\
       \hat{Q} &             \\
    };
  \path
    (m-1-1) edge [->] node [left] {$\pi$} (m-2-1)
    (m-1-1.east |- m-1-2)
      edge [->] node [above] {$\phi$} (m-1-2)
    (m-2-1.east) edge [->,dashed] node [below] {$\exists! \ \hat{\phi}$} (m-1-2);
\end{tikzpicture}
\end{center}
\caption{We define $\hat{Q} := \mathbb{C}^{n\times n}/ \operatorname{ker}(\phi)$. The figure tells us that we get an isomorphism between $Q$ and $\hat{Q}$ from the isomorphism theorem. In other words this means that we constructed an image of $Q$ with the right relations between elements in $Q$ in a accessible space $\mathbb{C}^{n\times n}$.}
\label{fig_iso}
\end{figure}

Now we can state the main proposition of this section.
\begin{prop}[Identification of $Q^\star$]\label{thm:identification_Qstar}
    An element $g \in (\mathbb{C}^{n\times n})^\star$ can be identified up to equivalence with a linear functional in $Q^\star$ if and only if $g$ annihilates $\operatorname{ker}(\phi)$ or in other words
\begin{equation*}
    g(M) = 0 \quad \forall M \in \ker(\phi).
\end{equation*}
\end{prop}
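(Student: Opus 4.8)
The plan is to use the isomorphism $\hat\phi\colon \hat Q \to Q$ from Figure~\ref{fig_iso} to transport functionals back and forth between $Q$ and $\mathbb{C}^{n\times n}$, and to recognize that ``annihilates $\ker(\phi)$'' is precisely the condition under which a functional on $\mathbb{C}^{n\times n}$ descends to the quotient $\hat Q$. Concretely, I would argue both directions as follows.

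\textbf{Reduction of $Q^\star$ to $\hat Q^\star$.} Since $\hat\phi$ is a linear isomorphism, its transpose $\hat\phi^\star\colon Q^\star \to \hat Q^\star$, $\rho \mapsto \rho\circ\hat\phi$, is a linear isomorphism as well. So identifying an element of $(\mathbb{C}^{n\times n})^\star$ ``up to equivalence'' with a functional in $Q^\star$ is the same as identifying it with a functional in $\hat Q^\star = (\mathbb{C}^{n\times n}/\ker\phi)^\star$; here ``up to equivalence'' refers exactly to the ambiguity of adding a functional that vanishes on $\ker\phi$ being invisible after passing through $Q$.

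\textbf{($\Leftarrow$) Descent.} Suppose $g\in(\mathbb{C}^{n\times n})^\star$ satisfies $g(M)=0$ for all $M\in\ker\phi$. Then $g$ is constant on every coset $[x]$, so the assignment $\bar g([x]) := g(x)$ is well defined, and it is linear on $\hat Q$. Via the isomorphism $\hat\phi$ this yields the functional $\rho := \bar g\circ\hat\phi^{-1}\in Q^\star$, which satisfies $\rho(\phi(M)) = \bar g(\pi(M)) = g(M)$ for all $M\in\mathbb{C}^{n\times n}$; i.e.\ $\rho\circ\phi = g$. This is the sense in which $g$ ``is'' a functional in $Q^\star$: it factors through $\phi$.

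\textbf{($\Rightarrow$) Necessity.} Conversely, if $g = \rho\circ\phi$ for some $\rho\in Q^\star$ (which is the content of ``$g$ is identified with a functional in $Q^\star$''), then for any $M\in\ker\phi$ we get $g(M) = \rho(\phi(M)) = \rho(0) = 0$, so $g$ annihilates $\ker\phi$. Combining the two directions, and noting that two functionals $g,g'$ give rise to the same $\rho\in Q^\star$ iff $g-g'$ vanishes on $\ker\phi$ (again by surjectivity of $\phi$), establishes the claimed identification up to equivalence. I do not expect a genuine obstacle here; the only care needed is bookkeeping about what ``identified up to equivalence with a functional in $Q^\star$'' means precisely — it should be pinned down as ``$g$ factors as $\rho\circ\phi$ for some $\rho\in Q^\star$'', after which the statement is the standard universal property of the quotient combined with the first isomorphism theorem already invoked in Figure~\ref{fig_iso}.
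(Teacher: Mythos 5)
Your proof is correct and follows essentially the same route as the paper: the paper composes the two isomorphisms $\hat\phi^\star\colon Q^\star\to\hat Q^\star$ and $\pi^\star\colon\hat Q^\star\to\ker(\phi)^\perp$, which is exactly the descent-through-the-quotient argument you spell out by hand. Your version is merely a more explicit unpacking of those standard identifications (plus the worthwhile clarification that ``identified up to equivalence'' means $g$ factors as $\rho\circ\phi$), so there is nothing substantively different to report.
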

\begin{proof}
The map in \autoref{fig_iso} shows us that $\hat\phi: \hat Q \rightarrow Q $ that acts on  $\hat Q $ via 
\begin{align}
    \hat \phi(x) := \phi([x])
\end{align}
is an isomorphism. To describe $\hat{\Sigma}_2^\star$ we need to include the isomorphism $\phi$ in an appropriate manner in our discussion. For this purpose we consider the dual mapping
\begin{align}
    \hat{\phi}^\star: Q^\star &\to \hat{Q}^\star \\
    q^\prime &\mapsto q^\prime \circ \phi.
\end{align}
Basic linear algebra states that $\hat{\phi}^\star$ is an isomorphism. Furthermore a well-known result from functional analysis states that
\begin{align}
    \pi^\star: (\mathbb{C}^{n\times n}/\operatorname{ker}(\phi))^\star &\to \operatorname{\ker}(\phi)^\perp \\
    \hat{q}^\prime &\mapsto \hat{q}^\prime \circ \pi
\end{align}
 is an isomorphism as well. Therefore, we have the chain of isomorphisms
 \begin{align}
        Q^\star \stackrel{\hat{\phi}^\star}{\longrightarrow} \hat{Q}^\star \stackrel{\pi^\star}{\longrightarrow}  \operatorname{\ker}(\phi)^\perp,
 \end{align}
 whereby the annihilator is defined via
 \begin{align}
     \operatorname{ker}(\phi)^\perp := \{g \in (\mathbb{C}^{n \times n})^\star \ | \ g(M) = 0 \quad \forall M \in \operatorname{ker}(\phi)\}.
 \end{align}
\end{proof}
The proof of \autoref{thm:identification_Qstar} shows that quotient spaces become subspaces in duality. This is an important observation throughout the whole discussion. 

This simple but very powerful \autoref{thm:identification_Qstar} leads us to an abstract but nice description of elements in $\hat{Q}^\star$ and therefore of the moment matrices as generalizations of polynomial optimization in \autoref{ex:lasserre}:
\begin{cor}
    The relation between linear functionals from $Q^\star$ and $\hat{Q}^\star$ is given by
    \begin{equation*}
        \tr(\Gamma^\sigma M) = \sigma(\phi(M)).
    \end{equation*}
    For $\Gamma^\sigma := \phi^\star(\sigma)$. These matrices are also called \textit{moment matrices} in other hierarchies. 
\end{cor}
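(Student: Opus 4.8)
The plan is to unwind the definition of the adjoint map $\phi^\star$ together with the standard identification of $(\mathbb{C}^{n\times n})^\star$ with $\mathbb{C}^{n\times n}$ via the trace pairing. Recall that $\phi:\mathbb{C}^{n\times n}\to Q$ acts by $\ketbra{i}{j}\mapsto\gamma_i\gamma_j^\star$, so its dual $\phi^\star:Q^\star\to(\mathbb{C}^{n\times n})^\star$ is given by $\phi^\star(\sigma)=\sigma\circ\phi$. Since every linear functional on $\mathbb{C}^{n\times n}$ is represented by a unique matrix $\Gamma$ through $M\mapsto\tr(\Gamma M)$ (the finite-dimensional Riesz identification used already for property (ii) in \autoref{subsection:conventional_sdp}), I would simply \emph{define} $\Gamma^\sigma$ to be the matrix representing the functional $\phi^\star(\sigma)$, which is exactly the statement $\Gamma^\sigma:=\phi^\star(\sigma)$ read under this identification.

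First I would read off the matrix entries by testing against the standard basis: on one hand $\tr(\Gamma^\sigma\ketbra{i}{j})$ is an entry of $\Gamma^\sigma$, and on the other hand it equals $\phi^\star(\sigma)(\ketbra{i}{j})=\sigma(\phi(\ketbra{i}{j}))=\sigma(\gamma_i\gamma_j^\star)$. This exhibits $\Gamma^\sigma$ explicitly as the matrix whose entries are the values of $\sigma$ on the generating products $\gamma_i\gamma_j^\star$, i.e.\ precisely the moment matrix. Then, for a general $M=\sum_{ij}m_{ij}\ketbra{i}{j}$, linearity of both sides in $M$ gives $\tr(\Gamma^\sigma M)=\sum_{ij}m_{ij}\sigma(\gamma_i\gamma_j^\star)=\sigma\bigl(\sum_{ij}m_{ij}\gamma_i\gamma_j^\star\bigr)=\sigma(\phi(M))$ by \eqref{defphi}, which is the claimed identity.

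Finally I would note consistency with \autoref{thm:identification_Qstar}: since $\sigma\in Q^\star$ and $\phi$ factors through $\hat Q=\mathbb{C}^{n\times n}/\ker(\phi)$, the functional $\phi^\star(\sigma)=\sigma\circ\phi$ annihilates $\ker(\phi)$, so it lies in $\ker(\phi)^\perp$ and hence corresponds to a genuine element of $\hat Q^\star$ under the chain of isomorphisms $Q^\star\xrightarrow{\hat\phi^\star}\hat Q^\star\xrightarrow{\pi^\star}\ker(\phi)^\perp$ from that proof; this is what makes $\Gamma^\sigma$ well defined on the level of equivalence classes. No step is a real obstacle here — the corollary is a direct bookkeeping consequence of \autoref{thm:identification_Qstar} — the only point requiring care is the index placement in the trace pairing (whether $\Gamma^\sigma$ or its transpose/conjugate appears) and keeping track of which slot of $\ketbra{i}{j}$ carries the involution, which I would pin down once and for all in the basis computation above.
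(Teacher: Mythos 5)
Your proposal is correct and follows exactly the route the paper intends: the paper's entire proof is the one-line remark that the identity is immediate from the definition of the dual mapping, and your computation simply spells out that definition together with the trace-pairing identification of $(\mathbb{C}^{n\times n})^\star$ with matrices. The explicit basis computation showing $(\Gamma^\sigma)$ has entries $\sigma(\gamma_i\gamma_j^\star)$, and the remark on compatibility with the annihilator of $\ker(\phi)$, are useful elaborations but not a different argument.
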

\begin{proof}
    This is clear from the definition of the dual mapping. 
\end{proof}

Including subspaces in a SDP is therefore simple, because constraints of the manner 
\begin{equation*}
    \tr(\rho K) = 0
\end{equation*}
correspond to functionals $\tr(\rho \cdot)$ which have the subspaces spanned by $K$ in the kernel. Therefore, if we add all generators of the kernel of $\phi$, we create the annihilator.  

For our purpose there is only left an open discussion about the kernel of $\phi$ and how to include such a kernel. But as we will see, it is in some sense the task of a physicist to know his experiment in such a manner that he can decide whether two experimental setups, i.e. mathematical models are equal to each other. Our examples will demonstrate this impressively. 
In conclusion of the discussion we can state the following main theorem of our paper:

\begin{theorem}[Relaxation]\label{thm:relaxation}
The relaxation \eqref{sdprelax} of a $\cs$-SDP that is build from a finite sequence $\{\gamma_1, \dots, \gamma_n\}$ can be formulated as an $n\times n$ matrix valued SDP, and therefore efficiently computed.  A concrete form of this SDP is obtained from any set of  matrices $M_{F_i}$ with
\begin{align}
    \hat \phi ([M_{F_i}]) =F_i
\end{align} 
and any set of $m_k$ matrices $\{K_i\}$ with 
\begin{align}
    \operatorname{span}\{K_j\}= \operatorname{ker}(\phi)
\end{align} and given by 
\begin{align}
\label{sdprelaxmat}
    &&\inf \quad& \tr( \rho M_{F_0}) &&\quad\\
    &&\sth \quad& \tr( \rho M_{F_i})\geq f_i &&\forall i\in\{1,\dots,m\}\nonumber\\
    &&\qquad \quad& \tr( \rho K_j) = 0 && \forall K_j\in\{1,\dots,m_k\}\nonumber\\
    &&&\rho\geq 0&&\nonumber
\end{align}
\end{theorem}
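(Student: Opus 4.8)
The plan is to combine \autoref{lem:fundamental_lem} with \autoref{thm:identification_Qstar} and translate the abstract relaxation \eqref{sdprelax} term by term into matrix language. The central identification is that the restriction map $\phi^\star : Q^\star \to \hat Q^\star \cong \ker(\phi)^\perp \subset (\mathbb{C}^{n\times n})^\star$ is an isomorphism, and under Riesz representation an element of $(\mathbb{C}^{n\times n})^\star$ is just $M \mapsto \tr(\rho M)$ for some matrix $\rho$. So the first step is to observe that every $\omega \in (\Sigma_2^{(n)})^\star$, once restricted to $Q$ (which contains $\mathcal F$, by the construction of $\gamma$), corresponds to a functional on $\mathbb{C}^{n\times n}$ that annihilates $\ker(\phi)$, hence to a matrix $\rho$ satisfying $\tr(\rho K_j) = 0$ for a spanning set $\{K_j\}$ of $\ker(\phi)$.

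Second, I would pin down which $\rho$ correspond to \emph{positive} functionals, i.e. to elements of $(\Sigma_2^{(n)})^\star$ rather than all of $Q^\star$. Here \autoref{lem:fundamental_lem} does the work: $\Sigma_2^{(n)} = \phi(\{M \ge 0\})$, so $\omega \in (\Sigma_2^{(n)})^\star$ means $\omega(\phi(M)) \ge 0$ for all $M \ge 0$, which by the corollary ($\tr(\Gamma^\sigma M) = \sigma(\phi(M))$ with $\Gamma^\sigma = \phi^\star(\sigma)$) is exactly $\tr(\rho M) \ge 0$ for all $M \ge 0$, i.e. $\rho \ge 0$. Thus the cone $(\Sigma_2^{(n)})^\star|_{\mathcal F}$ is faithfully parametrized by $\{\rho \ge 0 : \tr(\rho K_j) = 0 \ \forall j\}$. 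Conversely any such $\rho$ defines, via $\tr(\rho\,\cdot)$ and $\phi$, a well-defined positive functional on $Q \supset \mathcal F$, so the parametrization is onto; one must note that the value of the optimization is insensitive to the choice of extension off $\mathcal F$, exactly as argued for $\omega|_{\mathcal F}$ versus $\omega$ in \autoref{subsec:joint_numerical}.

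Third, I would rewrite the objective and constraints. Choosing any $M_{F_i}$ with $\hat\phi([M_{F_i}]) = F_i$ (possible since $\mathcal F \subset Q = \operatorname{Im}\phi$), the identity $\omega(F_i) = \sigma(\phi(M_{F_i})) = \tr(\rho M_{F_i})$ holds for the $\rho$ associated to $\omega$; note this is independent of the representative $M_{F_i}$ precisely because $\rho$ annihilates $\ker(\phi)$. Substituting into \eqref{sdprelax} turns $\inf\,\omega(F_0)$ into $\inf\,\tr(\rho M_{F_0})$ and each $\omega(F_i) \le f_i$ into $\tr(\rho M_{F_i}) \le f_i$ (the $\ge$ sign in \eqref{sdprelaxmat} being the cosmetic sign flip already present between \eqref{sdpdef} and \eqref{eq:value}), yielding exactly \eqref{sdprelaxmat}; finiteness and efficient solvability follow since $\rho$ ranges over $n\times n$ PSD matrices under finitely many linear constraints.

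The main obstacle, and the step deserving the most care, is the bijective correspondence between the abstract cone $(\Sigma_2^{(n)})^\star|_{\mathcal F}$ and the matrix feasible set: one direction (matrix $\rho$ gives a positive functional on $Q$) is immediate from \autoref{lem:fundamental_lem}, but the other direction requires that every element of $(\Sigma_2^{(n)})^\star$ extends/restricts to a functional on $Q$ of the form $\sigma\circ\phi$ with $\sigma \ge 0$, and that well-definedness on $\hat Q$ is exactly the annihilation condition on $\ker(\phi)$ — this is where \autoref{thm:identification_Qstar} is used in full force and where one must be careful that restricting to $\mathcal F$ does not lose or add feasible points relative to working on all of $Q$. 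Everything else is bookkeeping via the isomorphism chain $Q^\star \cong \hat Q^\star \cong \ker(\phi)^\perp$ and the Riesz identification.
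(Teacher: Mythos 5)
Your proof is correct and takes essentially the same route as the paper: Theorem~\ref{thm:relaxation} is stated there as the ``conclusion of the discussion'', i.e.\ its proof is exactly the assembly of Lemma~\ref{lem:fundamental_lem} (giving $\Sigma_2^{(n)}=\phi(\{M\geq 0\})$ and hence, by self-duality of the PSD cone, the identification of positive functionals with $\rho\geq 0$), Proposition~\ref{thm:identification_Qstar} (well-definedness on $\hat Q$ $\Leftrightarrow$ annihilation of $\ker(\phi)$, encoded as $\tr(\rho K_j)=0$), and the moment-matrix corollary $\tr(\Gamma^\sigma M)=\sigma(\phi(M))$ that you carry out. If anything, your write-up is more explicit than the paper about the surjectivity of the correspondence between $(\Sigma_2^{(n)})^\star\vert_{\mathcal F}$ and the matrix feasible set and about the (purely cosmetic) sign discrepancy between \eqref{sdprelax} and \eqref{sdprelaxmat}.
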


\subsection{CHSH worked out}
We close this section with a familiar example.
\begin{example}[The algebra generated by two projections]
    One of the most famous examples of abstract language in quantum theory is the algebra generated by two projections. Consider a set of generators
    \begin{align*}
        \mathcal{G} := \{1,A_0,A_1,B_0,B_1\}
    \end{align*}
    which should fulfill the following relations
    \begin{align*}
        \mathcal{R} := \{ A_i^2 = 1, B_i^2 = 1, B_i^\star = B_i, A_i^\star = A_i, [A_i,B_j] = 0 \ \vert \ 1\leq i, j \leq 2\}.
    \end{align*}
    It is well known that the algebra spanned by the generators and relations $C(\mathcal{G} \ \vert \ \mathcal{R})$ becomes a $\cs$-algebra\cite{halmos}. 
    Considering now a Bell inequality\cite{Bell:111654} 
    \begin{align*}
       F_0 := A_0B_0 + A_1B_1 + A_0B_1 - A_1B_0
    \end{align*}
    leads directly to a $\cs$-SDP in the following sense:
    \begin{align}
    &&\inf \quad&\omega( F_0) &&\quad\\
    &&\sth \quad&\omega(1)= 1&& \nonumber \\
    &&\quad \quad&\omega\in\pps{\mathcal{A}}&&\qquad.
\end{align}
The interpretation of this problem is to optimize over all states for finding the maximal violation of the Bell inequality above (CHSH type).\\
Solving this example with our method needs a sequence $\gamma$. One can choose for example in a first step
\begin{align*}
    \gamma = (1,A_0,A_1,B_0,B_1,A_0B_1,A_0A_1,B_0B_1).
\end{align*}
Furthermore the relations $\mathcal{R}$ become directly elements in the kernel of $\phi$ in the discussion above. Therefore working out the kernel is not difficult in this example for short sequences $\gamma$. In conclusion one can directly verify with the rules of \autoref{thm:relaxation} that the optimal value $\sqrt{2}2$ will be achieved in a nummerical example. 
\end{example}

\section{Convergence of the hierarchy}\label{sec:convergence}

In this section we aim to clearify under which circumstances we can prove convergence of the hierarchy by iterating \autoref{thm:relaxation}. The question now is how the optimal value of the relaxation of the $\cs$-SDP relates to its true value. 

For this we need the notion of base property. 

\begin{definition}[Base Property]
    Let $(\gamma^{(n)})_{n \geq 1}$ be a sequence in $\mathcal{A}$. Now define
    \begin{equation*}
        V_{n} := \text{span} ( \{\gamma^{(1)},\ldots,\gamma^{(n)} \}).
    \end{equation*}    
    We say that $(\gamma^{(n)})_{n \geq 1}$ has the base property, if for any $\epsilon >0$ and any $a \in \mathcal{A}$ there exists $n_{0} \in \mathbb{N}$ such that for all $n \geq n_{0}$ there is $b \in V_{n}$ with $\vert \vert a - b \vert \vert < \epsilon$.
\end{definition}

We emphasize at this point that there is an obvious way to translate the base property into separability of the $\cs$-algebra. Namely choose a sequence with base property and consider the vector space $V_n$ spanned by the subfield $\mathbb{Q}+ i\mathbb{Q}$ of $\mathbb{C}$ (this is in fact the field extension $\mathbb{Q}(i)$). 

For example all $\cs$-algebras with finite set of generators are automatically separable and fulfill therefore the base property.  

\begin{theorem}
We consider a $\cs$-SDP
\begin{align*}
    c^\star = &\inf  \rho(F_0) \\
    &\text{s.th.} \ \rho(F_i)\leq f_i 
\end{align*}
and assume that we have base property, strong duality at all levels, i.e. in particular
\begin{align*}
    c^\star = &\sup_{\lambda} f^T \lambda \\
    &\text{s.th.} \ \sum_{i} \lambda_i F_i -F_0 \in \pp{\mathcal{A}}.
\end{align*}
Then we have $\vert c^{\star} - c^{n} \vert \xrightarrow{n \rightarrow \infty} 0$
with value $c^n$ for each level.  
\end{theorem}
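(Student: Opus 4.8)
The plan is to exploit the two hierarchies set up in the paper: the sequence of inner approximations $\Sigma_2^{(n)} \subseteq \mathcal{P}_+(\mathcal{A})$ and the dual sequence of outer approximations $(\Sigma_2^{(n)})^\star \supseteq \mathcal{P}_+(\mathcal{A})^\star$. Writing $c^n$ for the value of the $n$-th relaxation \eqref{sdprelax}, the containment $(\Sigma_2^{(n)})^\star \vert_{\mathcal{F}} \supseteq \mathcal{F}_+^\star$ together with monotonicity gives immediately $c^1 \leq c^2 \leq \dots \leq c^n \leq \dots \leq c^\star$. So the sequence $(c^n)$ is nondecreasing and bounded above by $c^\star$; it therefore converges to some limit $\ell \leq c^\star$, and the whole task reduces to proving $\ell \geq c^\star$, i.e. that no gap survives in the limit.

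First I would fix the primal-dual pair. On the relaxed side \autoref{thm:relaxation} realizes level $n$ as an honest $n\times n$ matrix SDP; by the assumed strong duality at every level its value $c^n$ equals that of its Lagrange dual, which (as written in the theorem statement for the exact problem) has the form $\sup_\lambda f^T\lambda$ subject to $\sum_i \lambda_i F_i - F_0 \in \Sigma_2^{(n)}$ — that is, the dual feasibility condition is positivity \emph{relative to the finite-level cone} $\Sigma_2^{(n)}$ rather than relative to $\mathcal{P}_+(\mathcal{A})$. Now I would pick, for each $n$, a near-optimal dual vector $\lambda^{(n)}$ and examine the corresponding slack element $S_n := \sum_i \lambda_i^{(n)} F_i - F_0 \in \mathcal{A}$. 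The key claim I need is: \emph{in the limit, dual feasibility with respect to $\Sigma_2^{(n)}$ becomes dual feasibility with respect to $\mathcal{P}_+(\mathcal{A})$.} Concretely, if $a \in \mathcal{P}_+(\mathcal{A})$ and $a = bb^\star$, then by the base property there are $b_n \in V_n$ with $\|b - b_n\| \to 0$, hence $b_n b_n^\star \in \Sigma_2^{(n)}$ and $\|a - b_n b_n^\star\| \to 0$ (using submultiplicativity of the norm and boundedness of $b_n$); so $\bigcup_n \Sigma_2^{(n)}$ is norm-dense in $\mathcal{P}_+(\mathcal{A})$. Dually, $\bigcap_n (\Sigma_2^{(n)})^\star = \overline{\mathcal{P}_+(\mathcal{A})}^{\,\star} = \mathcal{P}_+(\mathcal{A})^\star$ since the positive cone is already closed.

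With density in hand I would run the standard outer-approximation argument. Either (i) argue directly on the primal side: any $\omega$ feasible for level $n$ lies in $(\Sigma_2^{(n)})^\star\vert_{\mathcal{F}}$; a diagonal/compactness extraction (the feasible sets live in a fixed finite-dimensional space $\mathcal{F}^\star$ and are bounded once one normalizes using that $\mathbb{I}$ or the $\sqrt{F_i}$ were appended to $\gamma$, which pins down $\omega$) yields a cluster point $\omega_\infty$; and $\omega_\infty \in \bigcap_n (\Sigma_2^{(n)})^\star\vert_{\mathcal{F}} = \mathcal{F}_+^\star$ by the density computation, so $\omega_\infty$ is feasible for the exact $\cs$-SDP and $\omega_\infty(F_0) = \ell$, giving $c^\star \leq \ell$. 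Or (ii) argue on the dual side: from the slack elements $S_n$, show that $\mathrm{dist}(S_n, \mathcal{P}_+(\mathcal{A})) \to 0$ and a suitable boundedness of $\lambda^{(n)}$ lets one pass to a limit $\lambda^\infty$ with $\sum_i \lambda_i^\infty F_i - F_0 \in \mathcal{P}_+(\mathcal{A})$ (using that the cone is closed), so $f^T\lambda^\infty$ is a feasible value of the exact dual, whence $c^\star = \sup f^T\lambda \geq \lim f^T\lambda^{(n)} = \ell$. Combined with $\ell \leq c^\star$ this forces $\ell = c^\star$, i.e. $|c^\star - c^n| \to 0$.

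The main obstacle I anticipate is the boundedness/compactness input needed to extract a limit — on the primal side one must be sure the level-$n$ feasible functionals do not escape to infinity (this is exactly why the construction insists $\mathcal{F} \subseteq Q_n$ and suggests appending $\mathbb{I}$ or the $\sqrt{F_i}$, which should give a uniform norm bound on $\omega$ via a normalization constraint), and on the dual side one must control $\|\lambda^{(n)}\|$, which is where strong duality and some qualification such as strict feasibility of the primal (Slater-type) really earns its keep. The density step itself — that $\bigcup_n \Sigma_2^{(n)}$ is dense in $\mathcal{P}_+(\mathcal{A})$ — is the conceptual heart but is a short computation from the base property plus the $\cs$-identity $\|b b^\star\| = \|b\|^2$; the genuine work is making the limit exchange rigorous without assuming more than the hypotheses state.
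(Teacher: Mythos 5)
Your plan shares its essential skeleton with the paper's proof: the monotone, bounded sequence $c^1\le c^2\le\dots\le c^\star$; the key density step that any $z^\star z\in\mathcal{P}_+(\mathcal{A})$ is norm-approximated by $y^\star y\in\Sigma_2^{(n)}$ using the base property together with submultiplicativity (the paper expands $\|y^\star y-z^\star z\|=\|d^\star z+z^\star d+d^\star d\|$ exactly as you indicate); and the invocation of strong duality at every level to replace each $c^n$ by the dual value over the feasible set $\Omega^{(n)}=L^{-1}(\Sigma_2^{(n)}\cap\operatorname{im}(L))$ with $L(\lambda)=\sum_i\lambda_iF_i-F_0$. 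Where you diverge is the finishing move. The paper never extracts a limit point: it simply observes that $\sup_n c^n=\sup_{\lambda\in\bigcup_n\Omega^{(n)}}f^T\lambda$ and derives a contradiction with $c^\star=\sup_{\lambda\in\Omega}f^T\lambda$ if a gap $\delta>0$ persisted, thereby sidestepping the boundedness of $\lambda^{(n)}$ and the primal compactness issues that you (rightly) flag as the main obstacles to your routes (i) and (ii). This buys the paper a shorter argument, but at a cost you implicitly identify: the contradiction only works if every dual-feasible $\lambda\in\Omega$ can be approximated by elements of some $\Omega^{(n)}$, and this does \emph{not} follow directly from density of $\bigcup_n\Sigma_2^{(n)}$ in $\mathcal{P}_+(\mathcal{A})$, because one must approximate $L(\lambda)$ by elements of $\Sigma_2^{(n)}$ that lie \emph{in} $\operatorname{im}(L)$ (and with controlled perturbation of $\lambda$). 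So your more laborious limit-extraction plan is, if anything, more honest about where the real analytic work sits; the paper's version is cleaner on the surface but leaves that same approximation-within-$\operatorname{im}(L)$ step implicit.
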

\begin{proof}
Suppose that $x \in \mathcal{P}^{+}(\mathcal{A})$. By the $\cs$-property from \autoref{subsection:conventional_sdp}, there exists $z \in \mathcal{A}$ such that $x = z^{*} z$. By the base property, we can conclude that for sufficient large $n \in \mathbb{N}$ there exists $y \in V_{n}$ such that $\vert \vert y - z \vert \vert < \epsilon$. In particular we have that there is $d \in \mathcal{A}$ with $\vert \vert d \vert \vert < \epsilon$ such that $y+d =z$. Further $y^{\star} y \in \Sigma^{(n)}_{2}$. Therefore we have
\begin{align}
    \vert \vert y^{\star} y - z^{\star} z \vert \vert = \vert \vert (z-d)^{\star} (z-d) - z^{\star} z \vert \vert = \vert \vert d^{\star} z - z^{\star} d + d^{\star} d \vert \vert \leq \vert \vert d^{\star} z \vert \vert + \vert \vert z^{\star} d \vert \vert + \vert \vert d^{\star} d \vert \vert. 
\end{align}
By the submultiplicativity of the norm it follows that the left hand side can be done sufficiently small by choosing $\varepsilon>0$ in dependence of $z$ small enough. This means that we have 
\begin{align*}
     \Sigma_2^{(n)} \subset \Sigma_2^{(n+1)} \subset \ldots \subset \pp{\mathcal{A}} \quad n\to \infty
\end{align*}
as a limit of sets. Consider for
\begin{align*}
    L: \mathbb{R}^m &\to \mathcal{A} \\
    \lambda &\mapsto  \sum_{i} \lambda_i F_i -F_0
\end{align*}
the sets
\begin{align*}
    \Omega^{(n)} := L^{-1}(\Sigma_2^{(n)} \cap \text{im}(L))
\end{align*}
and in particular
\begin{align*}
    \Omega := L^{-1}(\mathcal{P}(\mathcal{A}) \cap \text{im}(L)),
\end{align*}
the set of all $\lambda \in \mathbb{R}^n$ such that $L(\lambda)$ is positive in $\mathcal{A}$. Then we have in particular
\begin{align*}
    \Sigma_2^{(n)} \cap \text{im}(L) \subset \ldots \subset \mathcal{P}(\mathcal{A}) \cap \text{im}(L) \quad n\to \infty.
\end{align*}
By strong duality on all levels we conclude
\begin{align*}
   c^{n} := \sup_{\omega \in \Omega^{(n)}} f^T\omega
\end{align*}
and 
\begin{align}\label{eq:proof_convergence}
    c^\star := \sup_{\omega \in \Omega} f^T\omega.
\end{align}
The sequence $(c^n)$ is monotone and bounded and therefore convergent towards $\sup_n c^n$. Assume for contradiction that there exists $\delta>0$ such that
\begin{align*}
    \vert \sup_n c^n  - c^\star \vert \geq \delta.
\end{align*}

Then we have in particular that for all $n \in \mathbb{N}$
\begin{align*}
    \vert \sup_{\omega  \in \Omega^{(n)}} f^T \omega  - c^\star \vert \geq \delta
\end{align*}
and therefore 
\begin{align*}
    \vert \sup_{\omega  \in \bigcup_n \Omega^{(n)}} f^T \omega  - c^\star \vert \geq \delta
\end{align*}
a contradiction to \eqref{eq:proof_convergence}. 
\end{proof}

This result has some metaphysical description. What we have done so far is build up on the assumption of having a $\cs$-algebra at hand. This means, that there is a notion of convergence inherently coming from the property of being a complete normed space. If we compare this with e.g. the NPA-hierarchy or even Lasserre's great polynomial optimization work, there is from a first view not clear what one means with 'convergence' in a topological sense. 

\section{\texorpdfstring{Symmetry reductions of SDPs as a special case of $\cs$-SDP}{}}\label{sec:symmetry_red}

Let us repeat the framework for symmetry reduction in SDPs from a theoretical perspective for finite groups. For this purpose consider a general SDP over a finite dimensional Hilbert space $\mathcal{H}$ of the following form
\begin{align}\label{eq:general_sdp}
    \max \ &\text{tr}[\rho F_0] \\
    \text{s.th.} \quad &\text{tr}[\rho F_j] \leq f_j \quad 1 \leq j \leq m \\
    &\rho \succeq 0.
\end{align}

A (finite) symmetry of this SDP is now a finite group $G$ with a representation\footnote{i.e. a group homomorphism}
\begin{align*}
    \Phi: G \to \text{GL}(\mathcal{H})
\end{align*}
such that 
\begin{align*}
    \Phi(g) F_j \Phi(g)^\dagger &= F_j \quad 1\leq j \leq m \\
    \Phi(g) F_0 \Phi(g)^\dagger &= F_0. \\
\end{align*}
We assume further that the representation is unitary, which means that $\Phi(g)^{-1} = \Phi(g)^\dagger$ for all $g\in G$. This assumption is for finite groups without loss of generality. The cyclicity of the trace now yields that a SDP which contains a finite symmetry has a solution which admits this symmetry. 

If we now assume access to a Hilbert-Schmidt orthonormal basis of the invariant subspace $\text{End}_{\mathbb{C}G}(\mathcal{H}) \subset \text{End}_\mathbb{C}(\mathcal{H})$
\begin{align}\label{eq:basis}
   \gamma := (B_1,\ldots,B_m) \subset \mathcal{B}(\mathcal{H})
\end{align}
we can build up our theory in the same manner as e.g. in \cite{de2007reduction}. Define for  
\begin{align*}
    B_i B_j = \sum_{k} \lambda_{ijk}B_k.
\end{align*}
the matrices $L_k \in \mathbb{C}^{m \times m}$ with the rule
\begin{align*}
    (L_k)_{i,j} := \lambda_{ijk}.
\end{align*}
Applying \autoref{lem:fundamental_lem} yields then the mapping
\begin{align*}
    \phi:\mathbb{C}^{m\times m} &\to \text{End}_{\mathbb{C}G}(\mathcal{H}) \subset \text{End}_{\mathbb{C}}(\mathcal{H})  \\
    L_k &\mapsto \sum_{i,j}\lambda_{ijk} B_iB_j^\dagger.
\end{align*}

Therefore it is easy to show that the optimal solution $\rho_{\text{opt}}$ is contained in $\text{End}_{\mathbb{C}G}(\mathcal{H})$, which is the set of all linear operators which commute with the operators $\Phi(g)$, i.e. the set of intertwiner and fulfills Slater's condition if the original SDP fulfilled Slater's 
condition. This observation states that the effective dimension of the SDP is in fact the $\mathbb{C}$-vector space dimension
\begin{align*}
    m := \text{dim}_{\mathbb{C}}\text{End}_{\mathbb{C}G}(\mathcal{H}).
\end{align*}
Therefore, in this case our hierarchy basically has been finished in one step, because the optimal solution is accessible in the dual space of $\text{End}_{\mathbb{C}G}(\mathcal{H})$. Furthermore there is no kernel to work out. The difficulty is really in defining the basis $(B_1,\ldots,B_m)$. For this task, however, there are methods as presented in de-Klerk et al.\cite{de2007reduction}.

\section{Conclusion}
In this paper we have taken a new look at known problems of optimization in quantum information theory. Our relaxation for $\cs$-SDPs is based on the fundamental observation that we are in fact only interested in the values of all positive functionals of a $\cs$-algebra on a finite vector space. Thus, all functionals of interest are characterized by a convex set in the dual space of $\mathbb{C}^{n\times n}$. This set can now be relaxed. We have chosen the way via the sum of squares in the style of Lasserre's hierarchy. Remarkably it, however, follows directly from our discussion how we can incorporate additional knowledge about, say, positive elements: one could either add them to $\Sigma_2^{(n)}$ directly or write them as SDP constraints. In contrast to Lasserre's hierarchy, we can not directly talk about local criteria about e.g. the dual space of a particular $\cs$-algebra $C(K)$ for a compact set $K$. But our general formulation gives hope that one may find in future local criteria for the $\cs$-algebra. In general one can apparently include the ideas of Lasserre, but then one has to choose the sequence $\gamma$, i.e. the basis of the vector space $Q$ in this case, to be polynomials. Then there is no way to present elements other than polynomials exactly. As this work pointed out, the idea of moment matrices can be generalized and is extremely powerful. We hope, that this general point of view will stimulate a new discussion about possible relaxations for general $\cs$-SDP in particular in quantum information theory. 
There is new work done by Klep et al.\cite{klep2023state}, which is able to tackle nonlinear constraints in a $\cs$-SDP similar notion. This has particular importance in causal structure problem as e.g. in Ligthart et al. \cite{Ligthart_2023, Ligthart_2023_1}
 
\section{Acknowledgements}
We thank Mario Berta, Tobias J. Osborne and Julius Zeiss for helpful discussions. We thank David Gross and Laurens Ligthart for organizing the workshop on SDP-hierarchies in May 2023 in Cologne. 
RS acknowledges financial support by the BMBF project ATIQ.

\bibliography{references}

\begin{thebibliography}{17}%
\makeatletter
\providecommand \@ifxundefined [1]{%
 \@ifx{#1\undefined}
}%
\providecommand \@ifnum [1]{%
 \ifnum #1\expandafter \@firstoftwo
 \else \expandafter \@secondoftwo
 \fi
}%
\providecommand \@ifx [1]{%
 \ifx #1\expandafter \@firstoftwo
 \else \expandafter \@secondoftwo
 \fi
}%
\providecommand \natexlab [1]{#1}%
\providecommand \enquote  [1]{``#1''}%
\providecommand \bibnamefont  [1]{#1}%
\providecommand \bibfnamefont [1]{#1}%
\providecommand \citenamefont [1]{#1}%
\providecommand \href@noop [0]{\@secondoftwo}%
\providecommand \href [0]{\begingroup \@sanitize@url \@href}%
\providecommand \@href[1]{\@@startlink{#1}\@@href}%
\providecommand \@@href[1]{\endgroup#1\@@endlink}%
\providecommand \@sanitize@url [0]{\catcode `\\12\catcode `\$12\catcode
  `\&12\catcode `\#12\catcode `\^12\catcode `\_12\catcode `\%12\relax}%
\providecommand \@@startlink[1]{}%
\providecommand \@@endlink[0]{}%
\providecommand \url  [0]{\begingroup\@sanitize@url \@url }%
\providecommand \@url [1]{\endgroup\@href {#1}{\urlprefix }}%
\providecommand \urlprefix  [0]{URL }%
\providecommand \Eprint [0]{\href }%
\providecommand \doibase [0]{https://doi.org/}%
\providecommand \selectlanguage [0]{\@gobble}%
\providecommand \bibinfo  [0]{\@secondoftwo}%
\providecommand \bibfield  [0]{\@secondoftwo}%
\providecommand \translation [1]{[#1]}%
\providecommand \BibitemOpen [0]{}%
\providecommand \bibitemStop [0]{}%
\providecommand \bibitemNoStop [0]{.\EOS\space}%
\providecommand \EOS [0]{\spacefactor3000\relax}%
\providecommand \BibitemShut  [1]{\csname bibitem#1\endcsname}%
\let\auto@bib@innerbib\@empty
\bibitem [{\citenamefont {Navascues}, \citenamefont {Pironio},\ and\
  \citenamefont {Acin}(2007)}]{navascues2007}%
  \BibitemOpen
  \bibfield  {author} {\bibinfo {author} {\bibfnamefont {M.}~\bibnamefont
  {Navascues}}, \bibinfo {author} {\bibfnamefont {S.}~\bibnamefont {Pironio}},\
  and\ \bibinfo {author} {\bibfnamefont {A.}~\bibnamefont {Acin}},\ }\bibfield
  {title} {\enquote {\bibinfo {title} {Bounding the {S}et of {Q}uantum
  {C}orrelations},}\ }\href {https://doi.org/10.1103/PhysRevLett.98.010401}
  {\bibfield  {journal} {\bibinfo  {journal} {Phys. Rev. Lett.}\ }\textbf
  {\bibinfo {volume} {98}},\ \bibinfo {pages} {010401} (\bibinfo {year}
  {2007})}\BibitemShut {NoStop}%
\bibitem [{\citenamefont {Lasserre}(2001)}]{lasserre}%
  \BibitemOpen
  \bibfield  {author} {\bibinfo {author} {\bibfnamefont {J.~B.}\ \bibnamefont
  {Lasserre}},\ }\bibfield  {title} {\enquote {\bibinfo {title} {Global
  optimization with polynomials and the problem of moments},}\ }\href@noop {}
  {\bibfield  {journal} {\bibinfo  {journal} {SIAM Journal on Optimization}\
  }\textbf {\bibinfo {volume} {11}},\ \bibinfo {pages} {796--817} (\bibinfo
  {year} {2001})}\BibitemShut {NoStop}%
\bibitem [{\citenamefont {De~Klerk}, \citenamefont {Pasechnik},\ and\
  \citenamefont {Schrijver}(2007)}]{de2007reduction}%
  \BibitemOpen
  \bibfield  {author} {\bibinfo {author} {\bibfnamefont {E.}~\bibnamefont
  {De~Klerk}}, \bibinfo {author} {\bibfnamefont {D.~V.}\ \bibnamefont
  {Pasechnik}},\ and\ \bibinfo {author} {\bibfnamefont {A.}~\bibnamefont
  {Schrijver}},\ }\bibfield  {title} {\enquote {\bibinfo {title} {Reduction of
  symmetric semidefinite programs using the regular-representation},}\
  }\href@noop {} {\bibfield  {journal} {\bibinfo  {journal} {Mathematical
  programming}\ }\textbf {\bibinfo {volume} {109}},\ \bibinfo {pages}
  {613--624} (\bibinfo {year} {2007})}\BibitemShut {NoStop}%
\bibitem [{\citenamefont {M.Berta}, \citenamefont {Fawzi},\ and\ \citenamefont
  {Scholz}(2015)}]{Berta2015}%
  \BibitemOpen
  \bibfield  {author} {\bibinfo {author} {\bibnamefont {M.Berta}}, \bibinfo
  {author} {\bibfnamefont {O.}~\bibnamefont {Fawzi}},\ and\ \bibinfo {author}
  {\bibfnamefont {V.~B.}\ \bibnamefont {Scholz}},\ }\bibfield  {title}
  {\enquote {\bibinfo {title} {Quantum bilinear optimization},}\ }\href
  {https://doi.org/10.1137/15M1037731} {\bibfield  {journal} {\bibinfo
  {journal} {SIAM J. Optim.}\ }\textbf {\bibinfo {volume} {26}},\ \bibinfo
  {pages} {1529–1564} (\bibinfo {year} {2015})}\BibitemShut {NoStop}%
\bibitem [{\citenamefont {Doherty}, \citenamefont {Parrilo},\ and\
  \citenamefont {Spedalieri}(2004)}]{Doherty_2004}%
  \BibitemOpen
  \bibfield  {author} {\bibinfo {author} {\bibfnamefont {A.~C.}\ \bibnamefont
  {Doherty}}, \bibinfo {author} {\bibfnamefont {P.~A.}\ \bibnamefont
  {Parrilo}},\ and\ \bibinfo {author} {\bibfnamefont {F.~M.}\ \bibnamefont
  {Spedalieri}},\ }\bibfield  {title} {\enquote {\bibinfo {title} {Complete
  family of separability criteria},}\ }\href
  {https://doi.org/10.1103/physreva.69.022308} {\bibfield  {journal} {\bibinfo
  {journal} {Physical Review A}\ }\textbf {\bibinfo {volume} {69}} (\bibinfo
  {year} {2004}),\ 10.1103/physreva.69.022308}\BibitemShut {NoStop}%
\bibitem [{\citenamefont {Navascues}, \citenamefont {Pironio},\ and\
  \citenamefont {Acin}(2008)}]{navascues2008}%
  \BibitemOpen
  \bibfield  {author} {\bibinfo {author} {\bibfnamefont {M.}~\bibnamefont
  {Navascues}}, \bibinfo {author} {\bibfnamefont {S.}~\bibnamefont {Pironio}},\
  and\ \bibinfo {author} {\bibfnamefont {A.}~\bibnamefont {Acin}},\ }\bibfield
  {title} {\enquote {\bibinfo {title} {A convergent hierarchy of semidefinite
  programs characterizing the set of quantum correlations},}\ }\href
  {https://doi.org/10.1088/1367-2630/10/7/073013} {\bibfield  {journal}
  {\bibinfo  {journal} {New J. Phys.}\ }\textbf {\bibinfo {volume} {10}},\
  \bibinfo {pages} {073013} (\bibinfo {year} {2008})}\BibitemShut {NoStop}%
\bibitem [{\citenamefont {RAEBURN}\ and\ \citenamefont
  {SINCLAIR}(1989{\natexlab{a}})}]{algebra_generated_by_two}%
  \BibitemOpen
  \bibfield  {author} {\bibinfo {author} {\bibfnamefont {I.}~\bibnamefont
  {RAEBURN}}\ and\ \bibinfo {author} {\bibfnamefont {A.~M.}\ \bibnamefont
  {SINCLAIR}},\ }\bibfield  {title} {\enquote {\bibinfo {title} {The c*-algebra
  generated by two projections},}\ }\href
  {http://www.jstor.org/stable/24491975} {\bibfield  {journal} {\bibinfo
  {journal} {Mathematica Scandinavica}\ }\textbf {\bibinfo {volume} {65}},\
  \bibinfo {pages} {278--290} (\bibinfo {year}
  {1989}{\natexlab{a}})}\BibitemShut {NoStop}%
\bibitem [{\citenamefont {Horodecki}, \citenamefont {Shor},\ and\ \citenamefont
  {Ruskai}(2003)}]{Horodecki_2003}%
  \BibitemOpen
  \bibfield  {author} {\bibinfo {author} {\bibfnamefont {M.}~\bibnamefont
  {Horodecki}}, \bibinfo {author} {\bibfnamefont {P.~W.}\ \bibnamefont
  {Shor}},\ and\ \bibinfo {author} {\bibfnamefont {M.~B.}\ \bibnamefont
  {Ruskai}},\ }\bibfield  {title} {\enquote {\bibinfo {title} {Entanglement
  breaking channels},}\ }\href {https://doi.org/10.1142/s0129055x03001709}
  {\bibfield  {journal} {\bibinfo  {journal} {Reviews in Mathematical Physics}\
  }\textbf {\bibinfo {volume} {15}},\ \bibinfo {pages} {629--641} (\bibinfo
  {year} {2003})}\BibitemShut {NoStop}%
\bibitem [{\citenamefont {Holevo}, \citenamefont {Shirokov},\ and\
  \citenamefont {Werner}(2005)}]{holevo2005separability}%
  \BibitemOpen
  \bibfield  {author} {\bibinfo {author} {\bibfnamefont {A.~S.}\ \bibnamefont
  {Holevo}}, \bibinfo {author} {\bibfnamefont {M.~E.}\ \bibnamefont
  {Shirokov}},\ and\ \bibinfo {author} {\bibfnamefont {R.~F.}\ \bibnamefont
  {Werner}},\ }\href@noop {} {\enquote {\bibinfo {title} {Separability and
  entanglement-breaking in infinite dimensions},}\ } (\bibinfo {year} {2005}),\
  \Eprint {https://arxiv.org/abs/quant-ph/0504204} {arXiv:quant-ph/0504204
  [quant-ph]} \BibitemShut {NoStop}%
\bibitem [{\citenamefont {Blackadar}(2006)}]{Blackadar2006}%
  \BibitemOpen
  \bibfield  {author} {\bibinfo {author} {\bibfnamefont {B.}~\bibnamefont
  {Blackadar}},\ }\href@noop {} {\emph {\bibinfo {title} {Operator Algebras}}}\
  (\bibinfo  {publisher} {Springer},\ \bibinfo {year} {2006})\BibitemShut
  {NoStop}%
\bibitem [{\citenamefont {Lasserre}(2009)}]{Lasserre_book}%
  \BibitemOpen
  \bibfield  {author} {\bibinfo {author} {\bibfnamefont {J.~B.}\ \bibnamefont
  {Lasserre}},\ }\bibfield  {title} {\enquote {\bibinfo {title} {The
  generalized moment problem},}\ }in\ \href@noop {} {\emph {\bibinfo
  {booktitle} {Moments, Positive Polynomials and Their Applications}}},\
  \bibinfo {series and number} {Imperial College Press optimization series}\
  (\bibinfo  {publisher} {IMPERIAL COLLEGE PRESS},\ \bibinfo {year} {2009})\
  pp.\ \bibinfo {pages} {3--13}\BibitemShut {NoStop}%
\bibitem [{\citenamefont {Putinar}(1993)}]{Putinar}%
  \BibitemOpen
  \bibfield  {author} {\bibinfo {author} {\bibfnamefont {M.}~\bibnamefont
  {Putinar}},\ }\bibfield  {title} {\enquote {\bibinfo {title} {Positive
  polynomials on compact semi-algebraic sets},}\ }\href
  {http://www.jstor.org/stable/24897130} {\bibfield  {journal} {\bibinfo
  {journal} {Indiana University Mathematics Journal}\ }\textbf {\bibinfo
  {volume} {42}},\ \bibinfo {pages} {969--984} (\bibinfo {year}
  {1993})}\BibitemShut {NoStop}%
\bibitem [{\citenamefont {RAEBURN}\ and\ \citenamefont
  {SINCLAIR}(1989{\natexlab{b}})}]{halmos}%
  \BibitemOpen
  \bibfield  {author} {\bibinfo {author} {\bibfnamefont {I.}~\bibnamefont
  {RAEBURN}}\ and\ \bibinfo {author} {\bibfnamefont {A.~M.}\ \bibnamefont
  {SINCLAIR}},\ }\bibfield  {title} {\enquote {\bibinfo {title} {The c*-algebra
  generated by two projections},}\ }\href
  {http://www.jstor.org/stable/24491975} {\bibfield  {journal} {\bibinfo
  {journal} {Mathematica Scandinavica}\ }\textbf {\bibinfo {volume} {65}},\
  \bibinfo {pages} {278--290} (\bibinfo {year}
  {1989}{\natexlab{b}})}\BibitemShut {NoStop}%
\bibitem [{\citenamefont {Bell}(1964)}]{Bell:111654}%
  \BibitemOpen
  \bibfield  {author} {\bibinfo {author} {\bibfnamefont {J.~S.}\ \bibnamefont
  {Bell}},\ }\bibfield  {title} {\enquote {\bibinfo {title} {{On the Einstein
  Podolsky Rosen paradox}},}\ }\href
  {https://doi.org/10.1103/PhysicsPhysiqueFizika.1.195} {\bibfield  {journal}
  {\bibinfo  {journal} {Physics}\ }\textbf {\bibinfo {volume} {1}},\ \bibinfo
  {pages} {195--200} (\bibinfo {year} {1964})}\BibitemShut {NoStop}%
\bibitem [{\citenamefont {Klep}\ \emph {et~al.}(2023)\citenamefont {Klep},
  \citenamefont {Magron}, \citenamefont {Volčič},\ and\ \citenamefont
  {Wang}}]{klep2023state}%
  \BibitemOpen
  \bibfield  {author} {\bibinfo {author} {\bibfnamefont {I.}~\bibnamefont
  {Klep}}, \bibinfo {author} {\bibfnamefont {V.}~\bibnamefont {Magron}},
  \bibinfo {author} {\bibfnamefont {J.}~\bibnamefont {Volčič}},\ and\
  \bibinfo {author} {\bibfnamefont {J.}~\bibnamefont {Wang}},\ }\href@noop {}
  {\enquote {\bibinfo {title} {State polynomials: positivity, optimization and
  nonlinear bell inequalities},}\ } (\bibinfo {year} {2023}),\ \Eprint
  {https://arxiv.org/abs/2301.12513} {arXiv:2301.12513 [math.FA]} \BibitemShut
  {NoStop}%
\bibitem [{\citenamefont {Ligthart}\ and\ \citenamefont
  {Gross}(2023)}]{Ligthart_2023}%
  \BibitemOpen
  \bibfield  {author} {\bibinfo {author} {\bibfnamefont {L.~T.}\ \bibnamefont
  {Ligthart}}\ and\ \bibinfo {author} {\bibfnamefont {D.}~\bibnamefont
  {Gross}},\ }\bibfield  {title} {\enquote {\bibinfo {title} {The inflation
  hierarchy and the polarization hierarchy are complete for the quantum bilocal
  scenario},}\ }\href {https://doi.org/10.1063/5.0143792} {\bibfield  {journal}
  {\bibinfo  {journal} {Journal of Mathematical Physics}\ }\textbf {\bibinfo
  {volume} {64}} (\bibinfo {year} {2023}),\ 10.1063/5.0143792}\BibitemShut
  {NoStop}%
\bibitem [{\citenamefont {Ligthart}, \citenamefont {Gachechiladze},\ and\
  \citenamefont {Gross}(2023)}]{Ligthart_2023_1}%
  \BibitemOpen
  \bibfield  {author} {\bibinfo {author} {\bibfnamefont {L.~T.}\ \bibnamefont
  {Ligthart}}, \bibinfo {author} {\bibfnamefont {M.}~\bibnamefont
  {Gachechiladze}},\ and\ \bibinfo {author} {\bibfnamefont {D.}~\bibnamefont
  {Gross}},\ }\bibfield  {title} {\enquote {\bibinfo {title} {A convergent
  inflation hierarchy for quantum causal structures},}\ }\href
  {https://doi.org/10.1007/s00220-023-04697-7} {\bibfield  {journal} {\bibinfo
  {journal} {Communications in Mathematical Physics}\ }\textbf {\bibinfo
  {volume} {401}},\ \bibinfo {pages} {2673--2714} (\bibinfo {year}
  {2023})}\BibitemShut {NoStop}%
\end{thebibliography}%
\end{document}